\newenvironment{breakablealgorithm}
{
		\begin{center}
			\refstepcounter{algorithm}
			\hrule height.8pt depth0pt \kern2pt
			\renewcommand{\caption}[2][\relax]{
				{\raggedright\textbf{\ALG@name~\thealgorithm} ##2\par}%
				\ifx\relax##1\relax 
				\addcontentsline{loa}{algorithm}{\protect\numberline{\thealgorithm}##2}%
				\else 
				\addcontentsline{loa}{algorithm}{\protect\numberline{\thealgorithm}##1}%
				\fi
				\kern2pt\hrule\kern2pt
			}
		}{
		\kern2pt\hrule\relax
	\end{center}
}
\newtheorem{theorem}{Theorem}[section]
\newtheorem{lemma}[theorem]{Lemma}
\newtheorem{proposition}[theorem]{Proposition}
\theoremstyle{definition}
\newtheorem{definition}[theorem]{Definition}
\newcommand{\OPT}{{\sf OPT} }
\newcommand{\ER}{Erd\H{o}s-R\'enyi\ }
\newcommand{\R}{\mathbb{R}}
\DeclareMathOperator{\avg}{Avg}
\newcommand{\fD}{\mathfrak{D}}\newcommand{\fF}{\mathfrak{F}}
\newenvironment{enumeratea}{\begin{enumerate}[\upshape (a)]}{\end{enumerate}}
\title{Finding a dense submatrix of a random matrix. Sharp bounds for  online algorithms}
\author{%
  Shankar Bhamidi$^*$,\,%
  David Gamarnik$^\dagger$,\,%
  Shuyang Gong$^\ddagger$%
}
\thanks{\textsuperscript{$*$}Department of Statistics and Operations Research, University of North Carolina at Chapel Hill; Email: \texttt{bhamidi@email.unc.edu}}
\thanks{\textsuperscript{$\dagger$}MIT Sloan School of Management; Email: \texttt{gamarnik@mit.edu}}
\thanks{\textsuperscript{$\ddagger$}School of Mathematical Sciences, Peking University; Email: \texttt{gongshuyang@stu.pku.edu.cn}}
\date{\today}
\subjclass[2010]{Primary: 62G32 60G70, 68Q17. }
	\keywords{large average submatrix, extreme value theory, overlap gap property, optimization over disorder.  }
  \renewcommand{\and}{, }
\begin{document}

\maketitle

\begin{abstract}    
We consider the problem of finding a dense submatrix of a matrix with \emph{i.i.d.} Gaussian entries, where 
density is measured by average value. This problem arose from practical applications in biology and social sciences \cites{madeira-survey,shabalin2009finding} and is known to exhibit a computation-to-optimization gap between the optimal value and best values achievable by existing polynomial time algorithms. In this paper we consider the class of online algorithms, which includes the  best known algorithm for this problem, and derive a tight approximation factor ${4\over 3\sqrt{2}}$ for this class. The result is established using a simple implementation of recently developed Branching-Overlap-Gap-Property \cite{huang2025tight}. We further
extend our results to $(\mathbb R^n)^{\otimes p}$ tensors with \emph{i.i.d.} Gaussian  entries, for which 
the approximation factor is proven to be ${2\sqrt{p}/(1+p)}$.

 \end{abstract}

\section{Introduction}
\label{sec-intro}

We consider the algorithmic problem of finding a densest submatrix of 
fixed size of a given matrix. Specifically, given a matrix 
$G\in \R^{n\times n}$ and  $k$, the goal is to find a set
of rows and columns combination
$I,J\subset [n], |I|=|J|=k$ which maximize
average entry of the submatrix indexed by $I$ and $J$, namely
\begin{align}
\max_{I,J\subset [n], |I|=|J|=k}
{1\over k^2}\sum_{i\in I, j\in J} G_{ij}.
\label{eq:max-ave}
\end{align}
Similarly, we address the case when $G\in \R^{n\otimes p}$ is a $n^{\otimes p}$ tensor
with side-length $n$ and rank $p$. Anologous to \eqref{eq:max-ave}, the goal is finding
a collection of sets $I_1,\ldots,I_p\subset [n]$ of size $k$
which maximize
\begin{align}
\max_{I_1,\ldots,I_p\subset [n]: |I_1|=\cdots=|I_p|=k}
{1\over k^p}\sum_{i_1\in I_1, \ldots, i_p\in I_p} 
G_{i_1,\ldots,i_p}. \label{eq:tensor}
\end{align}
For both problems,  we consider the setting when $G$ is 
random with i.i.d. standard normal entries.

This question arose recently in context of 
several applications including genetics and bioinformatics, text-mining and reccomender systems \cites{madeira-survey,shabalin2009finding,pontes2015biclustering} and prompted a series
of probabilistic algorithmic studies which we now
describe. The first paper which conducted 
a theoretical analysis of this question is~\cite{bhamidi2017energy} which deals with
 the   matrix case $p=2$. Among many other
results, the authors showed that as long as $k$ grows smaller than any power of $n$, $k=\exp(o(\log n))$, 
 the optimal value (\ref{eq:max-ave}) converges to $\eta_{\rm OPT}\triangleq 
2\sqrt{\log n/k}$ whp as $n$
increases. This answer is rather intuitive. It is 
the extremal value of ${n\choose k}^2$ many (the
total number of $k\times k$ submatrices) independent
Gaussians with variance $1/k^2$ (the variance
for every fixed submatrix). 

Next, \cite{bhamidi2017energy} considered a natural
so-called $\mathcal{LAS}$ 
(\emph{Large Average Submatrix}) algorithm,  where, starting from an initial submatrix, one alternates between keeping the rows fixed and finding the $k$ columns with the largest average amongst the fixed set of rows and then keeping the columns fixed and finding the $k$ rows with the largest average and iterating this procedure till reaching a fixed point of this operation.    A typical fixed point of this algorithm is one where there is no incentive to move to a different set of $k$ columns, keeping the rows fixed and the same for rows, keeping the columns fixed. The paper \cite{bhamidi2017energy} analyzed the structure of a sub-matrix satisfying these constraints for fixed size $k$ and showed that the average of such a fixed point was asymptotically $\sqrt{2}\sqrt{\log n/k}$, 
namely a factor $1/\sqrt{2}$ smaller than the optimum. Based on this  result it was conjectured that the $\mathcal{LAS}$ algorithm starting from a random initial submatrix would converge to a local optima with the above scaling of its average.

It was further suggested that the model might
exhibit an infamous computation-to-optimization
(also commonly called statistics-to-computation) gap:
known computationally feasible algorithms under-perform by at least 
a constant factor with respect to optimality. 
The  performance of the $\mathcal{LAS}$ algorithm conjectured in~\cite{bhamidi2017energy}  was established
in ~\cite{GL18AOS}. In the same paper a different iterative procedure 
called \emph{Incremental Greedy Procedure} ($\mathcal{IGP}$)
was proposed which whp produces a matrix with an improved average value 
\begin{align}
\eta_{\rm ALG}\triangleq 
    {4\over 3}\sqrt{2\log n/k}={2\sqrt{2}\over 3}{\OPT} \approx 0.942..~{\OPT}. \label{eq:eta-ALG}
\end{align}
This value, till date,  remains the best known algorithmic performance within the class of polynomial time algorithms. 
In the other direction,  an Overlap Gap Property (OGP) was established
in \cite{GL18AOS} for submatrices with values larger 
than $\eta_{\rm 2-OGP} \OPT$, where $\eta_{\rm 2-OGP} ={5\over 3\sqrt{3}} = 0.962..~$. Roughly speaking, this says
that whp,  pairs of $k\times k$ submatrices with average
values  above $\eta_{\rm 2-OGP}$ are either fairly close to each other 
(have the  number of common rows and columns at least some value $\nu_2$) or fairly far from
each other (the  number of common rows and columns at at most some value $\nu_1<\nu_2$).
OGP is a known barrier to classes of 
algorithms~\cites{gamarnik2021overlap,gamarnik2025turing,gamarnik2022disordered}. We use ${\rm 2-OGP}$ to indicate the matrix versus the more general tensor setting that we describe next. 

In this paper using OGP type argument
we establish that $\eta_{\rm ALG}$ given
in (\ref{eq:eta-ALG}) is optimal within 
the class of online algorithms appropriately
defined. Importantly, the $\mathcal{IGP}$ falls into
the category of online algorithm, and thus our result
establishes sharp performance bounds for online algorithms for this problem. We extend our result
to the case of random tensors (\ref{eq:tensor}). 
The optimal value for densest $k$-tensors
is found to be $\sqrt{2p \log n\over k^{p-1}}$ 
as was recently verified in
\cite{erba2025maximumaverage}
and \cite{abhishek2025large}.

We derive 
the tight approximation factor
for online algorithms and show that is  ${2\sqrt{p}\over p+1}$.
We do conjecture though 
that no polynomial time algorithm can improve upon
this value when $k$ is a growing function of $n$ (when $k$ is constant, the brute-force algorithm
itself is polynomial time). Naturally this remains 
beyond the 
scope of currently approachable results, since this
result would subsume proving $P\ne NP$.

The OGP construction adopted in this paper is inspired and follows 
closely the one used in~\cite{du2025algorithmic},
which considered the problem of graph alignment problem of two independent copies of an \ER 
random graph. As in \cite{du2025algorithmic}, 
we consider a certain continuous ultrametric tree structure of solutions with appropriately
high objective value. More precisely,  similar to
\cite{du2025algorithmic},  we show that there \emph{does
not} exist a regular tree of subtensors of 
our random tensor such that (a) the nodes of the trees correspond to subtensor, and 
the leaves of the tree
correspond to subtensors with average value
larger than $\eta_{\rm ALG}$, (b) distance between
neighbors in a tree, measured by Hamming distance
is appropriately small, and (c) the depth and the
degree of the tree is appropriately large. This non-existence property is what we
call branching or ultrametric-OGP.
At the same time we show that if a putative
online algorithm for finding matrix with value
larger than $\eta_{\rm ALG}$ exists, one can 
build a tree of such algorithm-produced subtensors
satisfying conditions (a),(b) and (c), thus 
leading to a contradiction. This is the  outline of our proof approach for our main result.

The ultrametric construction
of OGP described above was pioneered in ~\cite{huang2025tight} and
coined Branching-OGP in this paper. Ultrametricity arises naturally from the study of spin glasses and is an essential
tool for studying ground states and Gibbs
measures on these systems. Also, for spin glasses the Branching-OGP provides a tight algorithmic characterization for known polynomial time algorithms.
It is remarkable that ultrametric (branching)-OGP found its way in models beyond spin glasses, 
where the ultrametricity does not arise in the study of ground states and Gibbs measure, as both can be estimated with far more elementary probabilistic methods such as the second moment method. We should also highlight that the analysis of the branching-OGP in this paper 
(as well as~\cite{du2025algorithmic})  is very elementary if not straightforward, and thus a side value of our paper is a simple demonstration of the power of the branching-OGP based methods.

While in the past OGP was used primarily as an obstruction to classes of algorithms exhibiting stability, its use for online (but otherwise not necessarily stable) algorithms is rather recent and was introduced first in~\cite{gamarnik2023geometric} in the context of the random perceptron model. 
In addition to~\cite{du2025algorithmic}, \cite{erba2025maximumaverage},\cite{abhishek2025large} and the present paper, it was also recently adopted in~\cite{gamarnik2025optimal} in the context of studying cliques in dense \ER graphs. It is anticipated that OGP will
emerge as a useful general technique to study obstructions to online algorithms in many other settings.

\subsection{Notation}
We use $k=k_n \to \infty$ for the size of the subtensor problem and will later describe assumptions on the rate of divergence w.r.t. $n$ for our results to hold.  For a tensor $G\in (\mathbb R^n)^{\otimes p}$ and subsets $I_1,\dots,I_p\subset [n]$, we denote $G_{I_1,\dots,I_p}$ as the subtensor:
\begin{equation}
    G_{I_1,\dots,I_p} = \{G_{i_1,\cdots,i_p}|i_s\in I_s\text{ for all }1\leq s\leq p\}\,.\notag
\end{equation}
We denote $\operatorname{Ave}(G_{I_1,\dots, I_p})$ and $\operatorname{Sum}(G_{I_1,\dots, I_p})$ as the average and the sum of entries of the subtensor $G_{I_1,\dots, I_p}$ respectively. We define
\begin{equation}
    O_k\triangleq\{(i_1,\dots,i_p):1\leq i_1,\dots,i_p\leq k\} = [k]^p\,.\notag
\end{equation}
For a tensor $G\in (\mathbb R^n)^{\otimes p}$, we define $G_{\leq k}$ to be the subtensor 
\begin{equation}\label{eq-subtensor-in-corner}
    G_{\leq k} \triangleq \{G_{i_1,\dots,i_p}:(i_1,\dots,i_p)\in O_k\}\,.
\end{equation}
For a rooted tree $\mathcal{T}$, we denote $\mathcal{L}$ as the set of leaves. For a vertex $u$, let $|u|$ be the depth of $u$ in the tree $\mathcal{T}$. For two vertices $u,v\in\mathcal{T}$, we define $u\wedge v$ to be the common ancestor of $u,v$ with the largest depth. For two sequences $\{a_n\}$ and $\{b_n\}$, we use standard Landau notation and for e.g. write $a_n=o_n(b_n)$ if $a_n/b_n\to 0$ as $n\to \infty$, and $a_n=O_n(b_n)$ if $a_n\leq Cb_n$ holds for some constant $C>0$ which does not depend on $n$. 

The next Section contains our main results. We provide a brief discussion and future directions in Section \ref{sec:disc}.  The remaining Sections contain proofs.

\section{Main results}
We start in Section \ref{sec:res-pos} by describing an explicit algorithm called
Incremental Greedy Procedure ($\mathcal{IGP}$), which searches for large average sub-tensors and describe its performance, namely asymptotics for the average of a typical output of this algorithm. This will largely follow \cite{GL18AOS}.
Then in Section \ref{sec:res-neg} we introduce the class of online algorithms, of which the algorithm $\mathcal{IGP}$ will be a special case. Finally we show our main result:
within the general class of online algorithms, the performance of $\mathcal{IGP}$  
cannot be improved upon.

\subsection{Positive side: online algorithms}
\label{sec:res-pos}
In this section, we start by introducing an algorithm for finding the densest subtensor, which is essentially a generalization of the $\mathcal{IGP}$ algorithm in the matrix ($p=2$) setting  $G = (G_{i,j})\in \mathbb{R}^{n\times n}$ in \cite{GL18AOS}. We start by briefly recalling the intuition for this algorithm from \cite{GL18AOS} in the matrix setting and then give a precise description of the algorithm in the general $p$ setting.  Start with an arbitrary row $i_1$, consider the row $G_{i_1, [n]}$ and find the largest element $G_{i_1,j_1}$. Next consider the column $G_{[n], j_1}$ and find the row $i_2\neq i_1$ with the largest $G_{i_2, j_1}$ value.  Now consider the $2\times n$ submatrix $G_{\{i_1, i_2\}, [n]}$ and find the column $j_2\neq j_1 $ such that the sum of the $2\times 1$ $G_{\{i_1, i_2\}, j_2}$ submatrix is as large as possible. Iterate this procedure alternating between adding rows and adding columns till we hit $k$ rows and $k$ columns. This is the essential \emph{raison d'\^etre} of $\mathcal{IGP}$ algorithm, however directly analyzing the asymptotic performance of this algorithm is non-trivial owing to the dependence created as one sequentially adds rows and columns. This issue is easily circumvented by partitioning the entire $n\times n$ matrix into $k\times k$ equal size groups and only searching for the ``best row'' or ``best column'' in this group thus preserving independence as the algorithm proceeds. 

Let us now describe the precise algorithm, now in the setting with general $p$. Given $n\in\mathbb Z^+$, we partition $[n]$ into $k+1$ disjoint sets, where the first $k$ subsets are defined as follows:
\begin{equation}\label{eq:P-in}
    P_{i,n}=\{(i-1)\lfloor n/k\rfloor+1,(i-1)\lfloor n/k\rfloor+2,\dots,i\lfloor n/k\rfloor\}\text{ for }1\leq i \leq k\,.
\end{equation}

\begin{breakablealgorithm}\label{alg: IGP algorithm}
\caption{Incremental Greedy Procedure ($\mathcal{IGP}$)}
\begin{algorithmic}[1]
\State \textbf{Input:} A tensor $G\in (\mathbb R^n)^{\otimes p}$ and an integer $k$.
\State \textbf{Initialize:} Select $i_1,\dots,i_{p-1}\in P_{1,n}$ arbitrarily and set $I_1=\{i_1\},...,I_{p-1}=\{i_{p-1}\}$. Find $i_p\in P_{1,n}$, such that $\operatorname{Ave}(G_{I_1,\dots,I_{p-1},i_p})\geq \operatorname{Ave}(G_{I_1,\dots,I_{p-1},j})$ for all $j\in P_{1,n}$. Let $I_p=\{i_p\}$.
\State \textbf{While} $|I_1|=|I_2|=\cdots=|I_p|<k$ \textbf{do}
    \State \indent \textbf{For} $1\leq s\leq p$ \textbf{do}
    \State \indent \indent Find the index $i_s\in P_{|I_s|+1,n}$ such that $$\operatorname{Ave}(G_{I_1,\dots,I_{s-1},i_s,I_{s+1},\dots,I_p})\geq \operatorname{Ave}(G_{I_1,\dots,I_{s-1},j,I_{s+1},\dots,I_p})\text{ for all }j\in P_{|I_s|+1,n}.$$
    \State \indent \indent  Set $I_s=I_s\cup\{i_s\}$.
    \State \indent \textbf{End For}
    \State \textbf{End While}
\State \textbf{Return} $G_{I_1,\dots,I_p}$.
\end{algorithmic}
\end{breakablealgorithm}

\begin{theorem}
\label{thm:pos}
     Let $k= \exp(o(\log n))$. Denote the output of Algorithm~\ref{alg: IGP algorithm} by $G_{\mathcal{IGP}}$. For any $\epsilon>0$, we have the following:
    \begin{equation}
        \mathbb P\left[ \operatorname{Ave}(G_{\mathcal{IGP}})\geq \left(\frac{2p}{1+p}-o_k(1)-\epsilon\right) \sqrt{\frac{2\log n}{k^{p-1}}}  \right] = 1-o(1)\,.\notag
    \end{equation}
\end{theorem}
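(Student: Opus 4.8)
The plan is to analyze Algorithm~\ref{alg: IGP algorithm} by tracking the growth of $\operatorname{Sum}(G_{I_1,\dots,I_p})$ as the index sets grow from size $1$ to size $k$, exploiting the block partition $\{P_{i,n}\}$ to keep the relevant Gaussian entries independent across iterations. The key observation is that at the step where we extend $I_s$ from a set of size $m-1$ to size $m$ (with all other $I_t$ having size $m$ or $m-1$ depending on whether $t<s$ or $t\ge s$), we are choosing $i_s$ from the fresh block $P_{m,n}$, which has size $\lfloor n/k\rfloor$. For each candidate $j\in P_{m,n}$, the quantity $\operatorname{Sum}(G_{I_1,\dots,I_{s-1},j,I_{s+1},\dots,I_p})$ is a sum of $m^{s-1}(m-1)^{p-s}$ i.i.d.\ standard Gaussians (a product of the sizes of the other index sets), hence is Gaussian with that variance; crucially these are independent across $j$ because they involve disjoint entries of $G$, and independent of everything chosen so far because $P_{m,n}$ was untouched. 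So the increment to the sum is the maximum of $\lfloor n/k\rfloor$ i.i.d.\ centered Gaussians with known variance, which by standard extreme value theory concentrates around $\sqrt{2\log(n/k)}\cdot\sqrt{m^{s-1}(m-1)^{p-s}} \approx \sqrt{2\log n}\cdot m^{(p-1)/2}$ (using $k=\exp(o(\log n))$ so $\log(n/k)\sim\log n$, and $m-1\approx m$ up to lower order which will be absorbed into the $o_k(1)$).

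Next I would sum these increments over the whole run. Over one full "round" of the while loop (one pass through $s=1,\dots,p$ bringing all sets from size $m-1$ to size $m$), the total increment to $\operatorname{Sum}$ is approximately $p\cdot\sqrt{2\log n}\cdot m^{(p-1)/2}$. Summing over $m=1,\dots,k$ (the initialization handles the first coordinate of $m=1$, and a $p$-term telescoping handles the rest, all at negligible cost) gives
\begin{equation}
    \operatorname{Sum}(G_{\mathcal{IGP}}) \gtrsim p\sqrt{2\log n}\sum_{m=1}^{k} m^{(p-1)/2} \sim p\sqrt{2\log n}\cdot \frac{k^{(p+1)/2}}{(p+1)/2} = \frac{2p}{p+1}\sqrt{2\log n}\; k^{(p+1)/2}\,,\notag
\end{equation}
using the Riemann-sum asymptotics $\sum_{m=1}^k m^{(p-1)/2}\sim \frac{2}{p+1}k^{(p+1)/2}$. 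Dividing by $k^p$ to pass from $\operatorname{Sum}$ to $\operatorname{Ave}$ yields $\operatorname{Ave}(G_{\mathcal{IGP}})\gtrsim \frac{2p}{p+1}\sqrt{2\log n}\;k^{(1-p)/2} = \frac{2p}{p+1}\sqrt{\frac{2\log n}{k^{p-1}}}$, which is the claimed bound. The $\epsilon$ and $o_k(1)$ slack absorbs: (i) the deviation of each block-maximum below its mean, (ii) the replacement of $m^{s-1}(m-1)^{p-s}$ by $m^{p-1}$, (iii) the Riemann-sum error, and (iv) $\log(n/k)$ versus $\log n$.

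The main obstacle is making the high-probability statement uniform over all $\sim pk$ steps of the algorithm simultaneously, rather than just for a single step. For a single block-maximum of $N=\lfloor n/k\rfloor$ i.i.d.\ Gaussians one has a lower tail bound of the form $\mathbb P[\max < (1-\delta)\sqrt{2\log N}\cdot\sigma] \le \exp(-N^{\delta'})$ or similar, which is far smaller than any polynomial in $n$; since there are only $O(k)=n^{o(1)}$ steps, a union bound over all of them still leaves failure probability $o(1)$, provided one is a little careful that the variances $\sigma^2 = m^{s-1}(m-1)^{p-s}$ stay polynomially bounded in $n$ (true since $k=n^{o(1)}$), so that $\sqrt{2\log N}$ dominates $\sigma$-dependent corrections in the extreme-value estimate. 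A secondary point requiring care is the independence bookkeeping: one must verify that within a single pass of the inner for-loop, updating $I_1$ before choosing $i_2$ does not destroy independence — it does not, because each choice reads only entries with first (resp.\ later) coordinate in a block not yet used, and conditioning on the identity of previously chosen indices still leaves the entries in the current fresh block unconditioned. I would formalize this by conditioning sequentially on the filtration generated by the algorithm's choices and invoking that $G$ restricted to any fixed not-yet-queried set of coordinates is an i.i.d.\ Gaussian array independent of that filtration. With these two points handled, the deterministic summation above completes the proof.
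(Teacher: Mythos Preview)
Your proposal is correct and follows essentially the same approach as the paper: the paper likewise identifies each increment $M_s^{(r)}$ as the maximum of $\lfloor n/k\rfloor$ independent Gaussians of variance $r^{s-1}(r-1)^{p-s}$, proves via a Gaussian tail bound plus union bound over the $pk$ steps that all of them simultaneously exceed $(1-\epsilon)\sqrt{2\log n}\cdot\sqrt{r^{s-1}(r-1)^{p-s}}$ with probability $1-o(1)$, and then lower-bounds the double sum $\sum_{r}\sum_{s}$ by replacing $r^{(s-1)/2}(r-1)^{(p-s)/2}\ge (r-1)^{(p-1)/2}$ and evaluating the resulting Riemann sum. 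The only cosmetic difference is that the paper drops the $r=1$ term outright rather than arguing it is negligible, and writes $\log(n/k)$ instead of $\log n$ before absorbing the difference into $o_k(1)$.
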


\subsection{Computational hardness for online algorithms}
\label{sec:res-neg}

We now introduce the class of online algorithms that our analysis covers. 

\begin{definition}\label{def-online}
    Fix an algorithm $\mathcal{A}$ that takes a tensor $G\in (\mathbb R^n)^{\otimes p}$ as input and outputs a subtensor in $(\mathbb R^k)^{\otimes p}$ denoted $\mathcal{A}(G)$. 
    Recall 
        \eqref{eq-subtensor-in-corner} for the principle subtensor of a given tensor.
    The algorithm is called \emph{online} if the output $\mathcal{A}(G)$ is generated sequentially by constructing $\{\mathcal{A}(G)_{\leq r}\setminus \mathcal{A}(G)_{\leq r-1}:1\leq r\leq k\}$ according to the following rule:
    \begin{enumeratea}
        \item  
        $\mathcal{A}(G)_{1}$ is constructed based on $\{G_{i_1,\dots,i_p}:i_1,\dots,i_p\leq \lfloor\frac{1}{k}n\rfloor\}$.
    \item For each $1\leq s\leq k$, assuming 
    $\{\mathcal{A}(G)_{\leq r}\setminus \mathcal{A}(G)_{\leq r-1}
    :r\leq s-1\}$ are constructed,  $\{\mathcal{A}(G)_{\leq s}\setminus \mathcal{A}(G)_{\leq s-1}\}$ is determined by $\{G_{i_1,\dots,i_p}:i_1,\dots,i_p\leq \lfloor\frac{s}{k}n\rfloor\}$ and $\{\mathcal{A}(G)_{\leq r}\setminus \mathcal{A}(G)_{\leq r-1}
    :r\leq s-1\}$.
    \end{enumeratea}
\end{definition}
We denote the set of online algorithms by $\operatorname{OLA}$.
Note that the algorithm Increment Greedy Procedure ($\mathcal{IGP}$) is in $\operatorname{OLA}$. We now state our main result regarding tight upper bounds.
\begin{theorem}\label{thm-hardness-online}

Suppose $k=k_n\to\infty$.

For every $\epsilon>0$ and $p$ there
     exists $c=c(\epsilon,p)>0$ such that for any $\mathcal{A}\in\operatorname{OLA}$,
    \begin{equation}
        \mathbb P\left[ \operatorname{Ave}(\mathcal{A}(G))\geq \left( \frac{2p}{1+p}+\epsilon+o_k(1)\right)\sqrt{\frac{2\log n}{k^{p-1}}} \right]\leq \exp(-ck\log n)\,.\notag
    \end{equation}
\end{theorem}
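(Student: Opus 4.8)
The plan is to run a Branching-OGP (ultrametric overlap gap property) argument in the spirit of \cite{huang2025tight} and \cite{du2025algorithmic}, combined with a Jensen-type amplification that upgrades a bounded tree construction to the stated exponential bound. Abbreviate $\tau_n=\left(\frac{2p}{1+p}+\epsilon+o_k(1)\right)\sqrt{2\log n/k^{p-1}}$ and $p_n=\mathbb P[\operatorname{Ave}(\mathcal A(G))\ge\tau_n]$; we must show $p_n\le\exp(-ck\log n)$. We may assume $\mathcal A$ deterministic, after conditioning on its internal randomness.

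First I would fix tree parameters depending only on $\epsilon$ and $p$: a depth $D$, a branching number $\Delta$, and a partition of the $k$ algorithmic stages into consecutive blocks $[k]=B_1\sqcup\cdots\sqcup B_D$ with cumulative sizes $k_\ell=|B_1|+\cdots+|B_\ell|$; these are pinned down by the variational problem in the last paragraph. Put $n_\ell=\lfloor k_\ell n/k\rfloor$ and partition the entries of an $n^{\otimes p}$ tensor into \emph{shells}, shell $\ell$ being the entries $G_{i_1,\dots,i_p}$ with $\max_s i_s\in(n_{\ell-1},n_\ell]$. Build a $\Delta$-ary rooted tree $\mathcal T$ of depth $D$; to each node $w$ at depth $\ell\ge 1$ attach an independent standard Gaussian tensor $W^{(w)}$ supported on shell $\ell$, and for a leaf $u$ with ancestors $w_1,\dots,w_D$ let $G^{(u)}$ be the tensor whose shell-$\ell$ part equals $W^{(w_\ell)}$ for every $\ell$. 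Then each $G^{(u)}$ is marginally a standard Gaussian tensor, and two leaves $u,v$ with $|u\wedge v|=q$ share precisely shells $1,\dots,q$, i.e.\ the whole $[n_q]^{\otimes p}$ corner. The structural observation, immediate from Definition~\ref{def-online}, is that during stages $s\le k_q$ the algorithm only queries the $\lfloor sn/k\rfloor$-corner, which is contained in the $[n_q]^{\otimes p}$ corner; hence $\mathcal A$ behaves identically on $G^{(u)}$ and $G^{(v)}$ through block $q$, so its first $k_q$ selected indices in each dimension coincide. Writing $I^{(u)}=(I_1^{(u)},\dots,I_p^{(u)})$ for the index sets $\mathcal A$ produces on $G^{(u)}$, this gives $|I_j^{(u)}\cap I_j^{(v)}|\ge k_q$ for all $j$ whenever $|u\wedge v|=q$.

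Next comes the amplification. Revealing shells along $\mathcal T$ level by level exhibits the $\Delta$ subtrees below any node as conditionally i.i.d.\ given the shared shells; applying Jensen to the convex map $x\mapsto x^{\Delta}$ at each level yields
\[
\mathbb P\!\left[\,\bigcap_{u\in\mathcal L}\{\operatorname{Ave}(\mathcal A(G^{(u)}))\ge\tau_n\}\,\right]\ \ge\ p_n^{\,|\mathcal L|},\qquad |\mathcal L|=\Delta^{D}.
\]
On the event on the left, the structural observation furnishes a family of $|\mathcal L|$ index-set tuples, each of value $\ge\tau_n$ with respect to its own (marginally standard Gaussian) tensor, with pairwise overlaps in dimension $j$ at least $k_q$ whenever $|u\wedge v|=q$ --- an ``ultrametric family'' of high-value subtensors. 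The Branching-OGP, proved below by a first-moment bound, asserts that for the chosen $D,\Delta$ and block sizes no such family exists except with probability $\le\exp(-c_1k\log n)$ for some $c_1=c_1(\epsilon,p)>0$, as soon as the value level exceeds $\frac{2p}{1+p}\sqrt{2\log n/k^{p-1}}$. Hence $p_n^{\,|\mathcal L|}\le\exp(-c_1k\log n)$, i.e.\ $p_n\le\exp(-(c_1/|\mathcal L|)k\log n)$, which is the claim with $c=c_1/\Delta^{D}$ (a function of $\epsilon,p$ only).

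It remains to prove the Branching-OGP, which is the heart of the matter. Fix index-set tuples realizing a prescribed ultrametric overlap pattern; the sums $X_u=\operatorname{Sum}(G^{(u)}_{I_1^{(u)},\dots,I_p^{(u)}})$ then form a centered Gaussian vector with $\operatorname{Var}(X_u)=k^p$ and $\operatorname{Cov}(X_u,X_v)$ equal to the number of shared entries lying in shells $\le q$ --- equivalently, a Gaussian branching random walk on $\mathcal T$ with level-$\ell$ increment variance $k_\ell^p-k_{\ell-1}^p$. One estimates $\mathbb P[\forall u:\ X_u\ge k^p\tau_n]$ recursively down the tree, via the usual Laplace/change-of-measure analysis of the lower tail of the minimum of such a vector, multiplies by the entropy $\exp\!\big(O(\log n)\sum_\ell\Delta^\ell|B_\ell|\big)$ of admissible index-set patterns, and chooses $D,\Delta$ and the block sizes $|B_\ell|$ so as to minimize the resulting exponent. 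The main obstacle is to show that this optimization produces exactly the constant $\frac{2p}{1+p}$: in the continuum limit (blocks of vanishing relative size, depth and degree growing) the variational problem must reproduce the integral $p\int_0^1 s^{(p-1)/2}\,\mathrm ds=\frac{2p}{1+p}$ that drives the $\mathcal{IGP}$ analysis of Theorem~\ref{thm:pos}, so that the first moment decays exponentially precisely when the target value exceeds $\eta_{\rm ALG}$; the $\epsilon$ and $o_k(1)$ slacks absorb the finite-$D$ discretization error and the $\log(n/k)$-versus-$\log n$ discrepancy. A secondary point, handled within the same estimate, is that an online algorithm may re-select ``old'' coordinates, so the overlaps are only lower-bounded by $k_q$; one checks that the union bound still closes when taken over the full admissible window of overlaps, the configurations with anomalously large overlap corresponding to the algorithm effectively searching inside a strictly smaller corner, a regime controlled by the same estimate with $n$ replaced by $n_q$.
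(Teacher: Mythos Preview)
Your overall architecture matches the paper exactly: build a finite $D$-ary tree of correlated Gaussian instances sharing nested corners, use the online constraint to force the algorithm's outputs to agree along ancestral paths, apply Jensen level by level to get $\mathbb P[\text{all leaves succeed}]\ge p_{\rm suc}^{|\mathcal L|}$, and finish with a first-moment ``forbidden structure'' (Branching-OGP) bound showing the left-hand side is $\le\exp(-c_1k\log n)$.

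The substantive difference is in how the forbidden-structure bound is proved. You propose a recursive branching-random-walk estimate (``Laplace/change-of-measure analysis of the lower tail of the minimum''), then an optimization over block sizes and tree parameters whose continuum limit should recover $p\int_0^1 s^{(p-1)/2}\,\mathrm ds=\tfrac{2p}{p+1}$. The paper sidesteps all of that with a one-line pigeonhole: if every leaf has $\operatorname{Sum}(M_v)\ge(\kappa_p+\epsilon)\mathfrak D_n$, average over leaves and decompose by depth to find some level $\ell$ where the total level-$\ell$ increment exceeds $(1+\delta)$ times its ``fair share'' $(\alpha_\ell-\alpha_{\ell-1})\sqrt{p(\alpha_\ell^p-\alpha_{\ell-1}^p)/(\alpha_\ell-\alpha_{\ell-1})}$ of the Riemann sum for $\kappa_p$; that is a \emph{single} centered Gaussian of variance $D^{\ell-1}(\alpha_\ell^p-\alpha_{\ell-1}^p)k^p$, so one Gaussian tail bound plus the enumeration $n^{pk\sum_{i\le\ell}D^{i-1}(\alpha_i-\alpha_{i-1})}$ suffices, and choosing $D>\max_\ell \alpha_{\ell-1}/(2\delta(\alpha_\ell-\alpha_{\ell-1}))$ makes the exponent negative at every $\ell$. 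No recursion, no change of measure, no optimization beyond picking a fine enough partition so the Riemann sum is within $\epsilon/10$ of $\kappa_p$. Your route would also close, but this step is the one place your sketch is genuinely underspecified, and the paper's argument is markedly more elementary than what you outline. Your ``secondary point'' about the algorithm re-selecting old coordinates is a legitimate concern that the paper does not address explicitly; in the correlated-instance formulation it is harmless because the shell tensors at different tree nodes are independent regardless of index collisions, so the variance count in the tail bound is unaffected.
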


\subsection{Discussion}
\label{sec:disc}
We now provide a brief discussion, placing the results in this paper in the context of recent results and describe open directions to pursue in the future. 
To display the main ingredients of the phenomenon, this paper focused largely on the Gaussian disorder regime. The general proof techniques should carry over in a straightforward fashion to other assumptions on the driving disorder of the underlying tensor when the entries are independent and identically distributed with sub-Gaussian tail behavior. Simulation driven evidence suggest different behavior for sub-exponential tails.  Next, it would be interesting to see how far such barriers to efficient algorithms carry over when the underlying entries are perhaps weakly dependent. One natural first step is understanding the setting of correlation matrices where the underlying matrix results from computing correlations between a large collection of features observed on a collection of individuals and the goal is extracting subsets of features pertaining to large  average correlations within them; such questions turn out to be of importance in areas ranging from network neuroscience to itemset mining in the context of preference mining in social settings, and have inspired a host of literature both on extremal behavior of such correlations under various assumptions on the driving data \cite{cai2011limiting,cai2011limiting} as well as iterative algorithms and their empirically observed limits in mining subsets with large average correlations \cites{bodwin2018testing,dewaskar2023finding,mosso2017latent}. It would be interesting to see how far the techniques in this paper can be pushed to understand natural limits in these situations.

\section{Positive side: algorithm in finding the densest subtensor}
\label{sec:positive}

{
The goal of this Section is to prove Theorem \ref{thm:pos}. We need some additional notation. 
For each $1\leq s\leq p$, we denote $I_s^{(r)}$ as the set $I_s$ constructed by step $r$ ($|I_s^{(r)}|=r$ and $1\leq r\leq k$). Define
\begin{equation}
    M_s^{(r)}\triangleq\max_{j \in P_{r,n}} M_s^{(r)}(j)\,,
\end{equation}
where $$M_s^{(r)}(j) = \sum_{i_1\in I_1^{(r)}}\sum_{i_2\in I_2^{(r)}}\dots\sum_{i_{s-1}\in I_{s-1}^{(r)}}\sum_{i_{s+1}\in I_{s+1}^{(r-1)}}\cdots\sum_{i_p\in I_p^{(r-1)}} G_{i_1,\dots,i_{s-1},j,i_{s+1},\dots,i_p}\,.$$
Recalling the definition of $P_{i,n} $ in \eqref{eq:P-in}, we have that the random variables
$$\{M_s^{(r)}(j):1\leq s \leq p,1\leq r\leq k,j\in P_{r,n}\}$$
are independent.
The following lemma provides a guarantee for the lower bound of $M_s^{(r)}$.

\begin{lemma}\label{lem:est-Msr}
For any $\epsilon>0$, we have
    \begin{equation}
        \mathbb P\left[\forall 1\leq r\leq k,1\leq s\leq p\,, M_s^{(r)} \geq \sqrt{r^{s-1}(r-1)^{p-s}}\cdot \sqrt{(2-\epsilon)\log n} \right] = 1-o(1)\,.\notag
    \end{equation}
    We denote this event by $\mathcal{G}$.
\end{lemma}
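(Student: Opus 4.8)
The plan is to reduce the claim to a standard lower‑tail bound for the maximum of i.i.d.\ Gaussians, using the independence recorded just above the lemma. Throughout we use the standing hypothesis $k=\exp(o(\log n))$, so that $k=n^{o(1)}$ and $N\triangleq|P_{r,n}|=\lfloor n/k\rfloor$ satisfies $N\ge n^{1-o(1)}\to\infty$ and $\log k=o(\log n)$; we may also assume $0<\epsilon<1$. First I would record the relevant law: by the independence statement preceding the lemma, for each fixed $(r,s)$ the family $\{M_s^{(r)}(j):j\in P_{r,n}\}$ consists of $N$ i.i.d.\ copies of $\mathcal N(0,\sigma_{r,s}^2)$ with $\sigma_{r,s}^2\triangleq r^{s-1}(r-1)^{p-s}$, each $M_s^{(r)}(j)$ being a sum of exactly $r^{s-1}(r-1)^{p-s}$ distinct entries of $G$. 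When $r=1$ and $s<p$ we have $\sigma_{r,s}=0$, hence $M_s^{(r)}=0$ and the asserted inequality trivially holds; for all remaining pairs $\sigma_{r,s}\ge 1$, and I focus on those.

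The core step is the Gaussian maximum lower‑tail bound. Writing $M_s^{(r)}=\sigma_{r,s}\max_{j\in P_{r,n}}Z_j$ with the $Z_j$ i.i.d.\ standard normal and setting $t_n\triangleq\sqrt{(2-\epsilon)\log n}$,
\[
\mathbb P\big[M_s^{(r)}<\sigma_{r,s}t_n\big]=\big(1-\bar\Phi(t_n)\big)^{N}\le\exp\big(-N\bar\Phi(t_n)\big),
\]
where $\bar\Phi$ is the standard normal survival function. The elementary bound $\bar\Phi(t)\ge\frac{1}{\sqrt{2\pi}}\,\frac{t}{t^2+1}\,e^{-t^2/2}$ gives $\bar\Phi(t_n)\ge c_0(\log n)^{-1/2}\,n^{-1+\epsilon/2}$ for some $c_0=c_0(\epsilon)>0$, so that
\[
N\bar\Phi(t_n)\ \ge\ \frac{n}{2k}\cdot\frac{c_0}{\sqrt{\log n}}\,n^{-1+\epsilon/2}\ =\ \frac{c_0}{2}\cdot\frac{n^{\epsilon/2}}{k\sqrt{\log n}}\ \ge\ n^{\epsilon/4}
\]
for all large $n$, using $k=n^{o(1)}$. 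Therefore $\mathbb P[M_s^{(r)}<\sigma_{r,s}t_n]\le\exp(-n^{\epsilon/4})$ for every such pair.

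Finally I would take a union bound over the at most $pk\le pn$ pairs $(r,s)$ with $1\le r\le k$ and $1\le s\le p$, obtaining $\mathbb P[\mathcal G^{c}]\le pk\exp(-n^{\epsilon/4})=o(1)$, which is the assertion. The only genuinely load‑bearing point is the interplay between the block size $N\asymp n/k$ and the Gaussian tail at height $\sqrt{(2-\epsilon)\log n}$: it is precisely the sub‑polynomial growth $k=\exp(o(\log n))$ that forces $N\bar\Phi(t_n)$ to diverge (indeed polynomially in $n$), which is simultaneously what makes the per‑pair failure probability stretched‑exponentially small and what lets the union bound over $\Theta(pk)$ pairs go through. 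Were $k$ polynomial in $n$, both this estimate and the lemma itself would fail; everything else is entirely routine.
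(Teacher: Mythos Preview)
Your proof is correct and follows essentially the same approach as the paper: identify the law of $M_s^{(r)}(j)$ as $\mathcal N(0,r^{s-1}(r-1)^{p-s})$, bound the lower tail of the maximum via $(1-\bar\Phi(t_n))^{N}\le\exp(-N\bar\Phi(t_n))$, use $k=n^{o(1)}$ to get a stretched-exponential failure probability, and union bound over the $O(pk)$ pairs. Your treatment is in fact slightly more careful than the paper's, since you explicitly dispose of the degenerate case $r=1,\ s<p$ (where $\sigma_{r,s}=0$) and make the role of the hypothesis $k=\exp(o(\log n))$ fully explicit.
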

\begin{proof}
 For each $j\in P_{r,n}$, $M_s^{(r)}(j)$ obeys $\mathbf{N}(0,r^{s-1}(r-1)^{p-s})$. Therefore, by standard gaussian tail estimate, we have
    \begin{equation}
        \mathbb P\left[ M_s^{(r)}(j)\geq  \sqrt{r^{s-1}(r-1)^{p-s}}\cdot \sqrt{(2-\epsilon)\log n}\right] = \frac{1}{n^{1-\frac{\epsilon}{2}+o(1)}}\,.\notag
    \end{equation}
    Therefore, by independence we obtain
    \begin{equation}
    \begin{split}
        &\mathbb P\left[M_s^{(r)}\geq \sqrt{r^{s-1}(r-1)^{p-s}}\cdot \sqrt{(2-\epsilon)\log n}\right] = 1-\left(1-\frac{1}{n^{1-\frac{\epsilon}{2}+o(1)}}\right)^{n/k}\\
        =&1-\exp\left(-n^{\frac{\epsilon}{2}+o(1)}\right)\,.
    \end{split}\notag
    \end{equation}
    By applying a union bound on $1\leq s\leq p$ and $1\leq r\leq k$, we obtain the desired result.
\end{proof}
Then we present the proof of Theorem~\ref{thm:pos}.
\begin{proof}[Proof of Theorem~\ref{thm:pos}]
By Lemma~\ref{lem:est-Msr}, we have that $\mathcal{G}$ happens with probability $1-o(1)$. Now suppose that $\mathcal{G}$ holds, which implies
\begin{equation}
\begin{split}
    &\operatorname{Sum}(G_{\mathcal{IGP}})\geq \sum_{r=2}^k\sum_{s=1}^p M_s^{(r)}\geq (1-\epsilon)\sqrt{2\log( n/k)}\sum_{r=2}^k\sum_{s=1}^p r^{\frac{s-1}{2}}(r-1)^{\frac{p-s}{2}}\\
    \geq& (1-\epsilon)\sqrt{2\log( n/k)}\sum_{r=2}^k p (r-1)^{\frac{p-1}{2}}=(1-\epsilon)\sqrt{2\log( n/k)}pk^{\frac{p+1}{2}}\sum_{r=1}^{k-1} \frac{1}{k} \left(\frac{r}{k}\right)^{\frac{p-1}{2}}\\
    =&(1-\epsilon)pk^{\frac{p+1}{2}}\sqrt{2\log( n/k)}\times(1-o_k(1))\int_0^1 x^{\frac{p-1}{2}}dx\\
    =&(1-\epsilon-o_k(1))\frac{2p}{p+1}k^{\frac{p+1}{2}}\sqrt{2\log (n/k)}\,,
\end{split}\notag
\end{equation}
where the second inequality is from the definition of $\mathcal{G}$, in the third inequality we used $r^{\frac{s-1}{2}}\geq (r-1)^{\frac{s-1}{2}}$, and the second equality follows from the definition of Riemann integral. By taking an average of $G_{\mathcal{IGP}}$, the conclusion of Theorem~\ref{thm:pos} follows. 
\end{proof}
}

\section{Negative side: algorithm obstruction via ultrametric tree}
In this section, we establish the hardness result for online algorithms. 
The proof hinges on the branching-OGP structure, which was first introduced in \cite{huang2025tight} in the context of optimizing the mean-field spin glass model. Subsequently, \cite{du2025algorithmic} leveraged branching-OGP to characterize the computational hardness of the graph alignment problem. We start by describing some general constructions and technical bounds in Section \ref{sec:bogp}. These constructions and bounds are then used to prove Theorem \ref{thm-hardness-online} in Section \ref{sec:proof-neg}.

\subsection{The branching overlap gap property in densest subtensor}
\label{sec:bogp}
Before we formally define the branching OGP, we need some notation. Let $\epsilon>0$. We will use Riemann integral approximation to corresponding sums. Partition the interval $[0,1]$ using a sequence $0=\alpha_0<\alpha_1<\cdots<\alpha_N=1$ and write $\Delta:=\max_{1\leq i\leq n} (\alpha_i-\alpha_{i-1})$. Note that
\begin{equation}
    \begin{split}
        \sum_{i=1}^N(\alpha_i-\alpha_{i-1})(\alpha_{i-1})^{\frac{p-1}{2}}\leq \sum_{i=1}^N (\alpha_i-\alpha_{i-1})\sqrt{\frac{\sum_{j=0}^{p-1}(\alpha_{i-1})^j(\alpha_i)^{p-1-j}}{p}}{\leq} \sum_{i=1}^N(\alpha_i-\alpha_{i-1})(\alpha_{i})^{\frac{p-1}{2}}\,,
    \end{split}\notag
\end{equation}
where in the second inequality we use $\sum_{j=0}^{p-1}(\alpha_{i-1})^j(\alpha_i)^{p-1-j}\leq p\alpha_i^{p-1}$ which follows using $\alpha_{i-1}\leq\alpha_i$.
Thus, by the property of Riemann integral, for any sequence of partitions with $\Delta \to 0$ we have
\begin{equation}
\label{eqn:153}
    \lim_{\Delta\to 0} p\sum_{i=1}^N (\alpha_i-\alpha_{i-1})\sqrt{\frac{\sum_{j=0}^{p-1}(\alpha_{i-1})^j(\alpha_i)^{p-1-j}}{p}}=p\int_0^1x^{\frac{p-1}{2}}\operatorname{d}x=\frac{2p}{p+1}:=\kappa_p\,.
\end{equation}
Thus for any $\epsilon>0$, we can choose $N=N(\epsilon,p)$ and a collection $0=\alpha_0<\alpha_1<\cdots<\alpha_N=1$, such that with $\kappa_p$ as in \eqref{eqn:153},
\begin{equation}\label{eq-approx-integral-tensor}
     p\sum_{i=1}^N (\alpha_i-\alpha_{i-1})\sqrt{\frac{\sum_{j=0}^{p-1}(\alpha_{i-1})^j(\alpha_i)^{p-1-j}}{p}}\leq \kappa_p+\frac{\epsilon}{10}\,.
\end{equation}
For future use write, 
$$1+\delta:= \frac{\kappa_p+\epsilon}{\kappa_p+\frac{\epsilon}{10}}.$$ 
Fix a large constant $D$ satisfying 
\begin{equation}\label{eq-choice-D}
    D>\max_{1\leq \ell \leq N}\frac{\alpha_{\ell-1}}{2\delta(\alpha_\ell-\alpha_{\ell-1})}\,.
\end{equation}
\begin{definition}[Forbidden structure in $p$-tensor] For a $D$-regular tree $\mathcal{T}$ with depth $N$ (where the depth of the root is assumed $1$) and leave set $\mathcal{L}$. Consider a sequence $0=\alpha_0<\alpha_1<\dots<\alpha_N=1$ satisfying \eqref{eq-approx-integral-tensor}. For each vertex $v\in\mathcal{T}$ with depth $1\leq j\leq N$,
we assign for each  $1\leq s\leq p$ a subset $A^{(s)}_v\subset [n]$ with $|A^{(s)}_v|=(\alpha_j-\alpha_{j-1})k$. For any $1\leq s\leq p$, we require $A^{(s)}_v\cap A^{(s)}_{v'}=\emptyset$ for any distinct $v$ and $v'$. We further assign a set of tensor entry indices for this $v$, which is defined by
\begin{equation}
    E_v\triangleq\bigcup_{s=1}^{p} \left( \bigotimes_{r=1}^{s-1} \bigcup_{i=1}^jA^{(r)}_{v(i)} \otimes A^{(s)}_v\otimes \bigotimes_{r=s+1}^{p} \bigcup_{i=1}^jA^{(r)}_{v(i)} \right)\,,
\end{equation}
where $v(i)$ is the ancestor of $v$ with depth $i$. For the set of vertices in the leaf set, namely for each $v\in\mathcal{L}$, we define $M_v$ as the subtensor in $(\mathbb R^k)^{\otimes p}$ formed by the indices in $\cup_{i=1}^N E_{v (i)}$. 
Then we say a set of such tensors $\{M_v\}_{v\in\mathcal{L}}$ has the \emph{forbidden structure} if 
\begin{equation}\label{eq-def-fobidden}
    \operatorname{Sum}(M_v)\geq \left(\kappa_p+\epsilon\right)\underbrace{\sqrt{2k^{p+1}\log n}}_{\fD_n} \text{ for all }v\in\mathcal{L}\,.
\end{equation}
\end{definition}

Note that the above is equivalent to 
\[\avg(M_v) \geq \left(\kappa_p+\epsilon\right)\sqrt{\frac{2\log n}{k^{p-1}}}, \]
for all $v\in \mathcal{L}$. 

\begin{proposition}
    \label{prop:forbidden-structure-tensor}
    Denote $\mathfrak{F}$ as the event that such a forbidden structure exists. Then there exists a constant $c=c(p)>0$,  that depends on $p$, independent of $k$,  such that
    \begin{equation}\label{eq-prop-forbidden-structure-tensor}
        \mathbb P\left[\mathfrak{F}\right]\leq \exp\left(-cpk\log n\right)\,.
    \end{equation}
\end{proposition}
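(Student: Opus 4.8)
The plan is to bound $\mathbb{P}[\mathfrak{F}]$ by a first-moment / union bound argument over all possible realizations of the forbidden structure, exploiting the ultrametric (branching) geometry to show that the Gaussian large-deviations cost of having \emph{all} $D^{N-1}$ leaves simultaneously attain value $(\kappa_p+\epsilon)\mathfrak{D}_n$ beats the entropy of the number of ways to embed the tree. First I would fix the combinatorial data: a $D$-regular depth-$N$ tree $\mathcal{T}$ is fixed once and for all (its shape is deterministic given $D,N$), so the only randomness in the ``structure'' is the assignment $v\mapsto (A_v^{(1)},\dots,A_v^{(p)})$ of pairwise-disjoint index sets of prescribed sizes $(\alpha_j-\alpha_{j-1})k$ to the vertices. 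The number of such assignments is crudely at most $\binom{n}{k}^{pD^N} \le \exp(O_p(k\log n))$ with the implied constant depending on $D,N$ (hence on $\epsilon,p$) but not on $k$; this is the entropy term.

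Next, for a \emph{fixed} assignment, I would analyze the probability that $\operatorname{Sum}(M_v) \ge (\kappa_p+\epsilon)\mathfrak{D}_n$ for all leaves $v$ simultaneously. The key point is the decomposition $\operatorname{Sum}(M_v) = \sum_{i=1}^N \operatorname{Sum}\big(\text{increment at depth } i\big)$ along the root-to-$v$ path, where the depth-$i$ increment lives on the index block $E_{v(i)}$, and — crucially — the increment blocks $E_u$ for distinct $u\in\mathcal{T}$ involve \emph{disjoint} sets of entries of $G$ (because the $A_u^{(s)}$ are pairwise disjoint across $u$), hence are independent Gaussians. Writing $X_u := \operatorname{Sum}(G_{E_u})$, each $X_u$ is centered Gaussian with variance $|E_u|$; a short computation gives $\operatorname{Var}(X_u) = p\,(\alpha_j-\alpha_{j-1})\,k\cdot k^{p-1} \cdot (1+o_k(1))$-type scaling — more precisely $|E_u| = \sum_{s=1}^p (\alpha_j - \alpha_{j-1})k \prod_{r\ne s}\alpha_j k$ roughly, but the relevant bound is the one baked into \eqref{eq-approx-integral-tensor}, namely that $\sum_{i=1}^N \sqrt{|E_{v(i)}|/k^{p+1}}\,$ summed appropriately is at most $(\kappa_p + \epsilon/10)\sqrt{2\log n}$-compatible. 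The leaves share prefixes: two leaves $v,v'$ with $v\wedge v'$ at depth $j$ share the increments $X_{v(1)},\dots,X_{v(j)}$.

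The heart of the argument is then a union bound over leaves combined with a conditioning/peeling over the tree from the root down. I would argue: condition on the increments at depths $1,\dots,N-1$; for the event $\{\operatorname{Sum}(M_v)\ge(\kappa_p+\epsilon)\mathfrak{D}_n \ \forall v\}$ to hold, each of the $D^{N-1}$ leaves needs its final independent increment $X_v$ to exceed a threshold determined by the partial sum along its path. Summing the Gaussian tail exponents: the dominant contribution is $\approx D^{N-1}\cdot(\text{cost of one leaf's deviation beyond a typical partial sum})$, and one shows that even the ``cheapest'' scenario — where the partial sums are as large as typical fluctuations allow, of order $\sqrt{|E_u|\log(\text{branching})}$ — still forces a net exponential cost $\ge c\,p\,k\log n\cdot D^{N-1}$, which dominates the entropy $O_p(k\log n)$ once $D$ is large; this is exactly why $D$ was chosen large in \eqref{eq-choice-D}. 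Concretely, the gap between the ``budget'' $\kappa_p+\epsilon$ demanded at the leaves and the ``achievable-along-one-branch'' value $\kappa_p+\epsilon/10$ from \eqref{eq-approx-integral-tensor}, amplified by the factor $1+\delta$ and the depth, yields a strictly positive per-leaf exponent; multiplying by $D^{N-1}$ leaves gives room to absorb the union bound.

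\textbf{Main obstacle.} The delicate step is the peeling/conditioning estimate: one must show that the constraints at the $D^{N-1}$ leaves cannot all be satisfied ``for free'' by a lucky-but-not-too-costly configuration of the shared ancestral increments. This requires carefully tracking, level by level, the trade-off between (i) the Gaussian cost of pushing an ancestral increment $X_u$ above its mean (which is shared by all $D^{N-\text{depth}(u)}$ descendant leaves, so is ``efficient'' per leaf) versus (ii) the cost of the final private increments. The choice of $D$ in \eqref{eq-choice-D} is precisely calibrated so that spreading the required excess across ancestral levels is never cheap enough: any valid configuration pays $\Omega(pk\log n)$ total, with the $D$-dependence ensuring the entropy $\log\binom{n}{k}^{pD^N}$ is absorbed. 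Making this rigorous amounts to an inductive tail bound over the levels of $\mathcal{T}$, closely mirroring the argument in \cite{du2025algorithmic}; I expect the bookkeeping of the variance factors $\sqrt{r^{s-1}(r-1)^{p-s}}$-type quantities (here in their continuous-$\alpha$ incarnation) to be the most error-prone part, but conceptually routine given \eqref{eq-approx-integral-tensor} and \eqref{eq-choice-D}.
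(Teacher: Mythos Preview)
Your plan captures the right structural ingredients—the path decomposition $\operatorname{Sum}(M_v)=\sum_\ell X_{v(\ell)}$ into independent Gaussian increments indexed by the tree, and the tension between ancestral and leaf-level contributions—but the route you propose (level-by-level peeling/conditioning) is genuinely different from, and more laborious than, the paper's.

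The paper bypasses the peeling entirely by an averaging-plus-pigeonhole trick. Writing $X_u=\gamma_u^*\,\fD_n$, the forbidden-structure condition gives $\sum_{\ell=1}^N \gamma_{v(\ell)}^*\geq \kappa_p+\epsilon$ for every leaf $v$; averaging this over all $D^{N-1}$ leaves collapses to the single inequality
\[
\sum_{\ell=1}^N \frac{1}{D^{\ell-1}}\sum_{|u|=\ell}\gamma_u^*\;\geq\;\kappa_p+\epsilon.
\]
Comparing with the Riemann-sum budget \eqref{eq-approx-integral-tensor}, pigeonhole forces some level $\ell$ to satisfy $\sum_{|u|=\ell}\gamma_u^*\geq (1+\delta)\,D^{\ell-1}(\alpha_\ell-\alpha_{\ell-1})\sqrt{p\sum_j \alpha_{\ell-1}^{j}\alpha_\ell^{p-1-j}}$. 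This is now a \emph{single} Gaussian tail event—the sets $E_u$ for $|u|=\ell$ are pairwise disjoint, so the total sum is $\mathbf{N}\big(0,\,D^{\ell-1}(\alpha_\ell^p-\alpha_{\ell-1}^p)k^p\big)$—and the union bound need only run over the assignments $\{A_u^{(s)}:|u|\leq\ell\}$. The balance between this level-$\ell$ entropy and the level-$\ell$ tail is precisely what \eqref{eq-choice-D} is calibrated for, and the computation closes in a few lines.

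Your peeling approach could likely be pushed through, but two cautions. First, the crude global entropy $\binom{n}{k}^{p\,D^N}$ must be beaten by the joint tail of all $D^{N-1}$ leaf constraints simultaneously; this amounts to solving (or upper-bounding) a quadratic program over all allocations of the excess $\epsilon$ to ancestral increments, which is exactly the ``main obstacle'' you identify, and it is not at all clear that the specific constant in \eqref{eq-choice-D}—designed for the level-wise balance—is the right one for your global balance. Second, the remark that the argument ``closely mirrors \cite{du2025algorithmic}'' is somewhat misleading: both that paper and the present one use the averaging-plus-pigeonhole reduction, not an inductive conditioning over depths. What the paper's route buys is a one-line Gaussian tail bound per level in place of the full peeling you outline.
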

\begin{proof}
    For each $v\in\mathcal{T}$, we denote $\gamma_v^*$ as the random variable such that
    \begin{equation}
        \sum_{(i_1,\dots,i_p)\in E_v} G_{i_1,\dots,i_p}=\gamma_v^*\fD_n\,.
    \end{equation}
    For any $v\in\mathcal{L}$, summing the above equality along the ray connecting the root to $v$, we obtain
    \begin{equation}
        \sum_{\ell=1}^N \sum_{(i_1,\dots,i_p)\in E_{v(\ell)}}G_{i_1,\dots,i_p}=\operatorname{Sum}(M_v) = \sum_{\ell=1}^N \gamma_{v(\ell)}^* \fD_n\,.
    \end{equation}
    By \eqref{eq-def-fobidden} on the event $\fF$, we have
    \begin{equation}
        \sum_{\ell=1}^N \gamma^*_{v(\ell)}\geq \kappa_p+\epsilon \text{ for all }v\in\mathcal{L}\,.
    \end{equation}
    Summing over the above expression and then taking the average, we get
    \begin{equation}\label{eq-sum-and-average}
    \sum_{\ell=1}^N \frac{1}{D^{\ell-1}}\sum_{v:|v|=\ell}\gamma_v^*\geq \kappa_p+\epsilon\,.
    \end{equation}
    Using \eqref{eq-sum-and-average} and \eqref{eq-approx-integral-tensor}, we have that there exists an $\ell\in[N]$ such that
    \begin{equation}
        \begin{split}
            &\frac{1}{D^{\ell-1}}\sum_{v:|v|=\ell}\gamma_v^*\geq \frac{\kappa_p+\epsilon}{\kappa_p+\epsilon/10} (\alpha_\ell-\alpha_{\ell-1})\sqrt{p\sum_{j=0}^{p-1}(\alpha_{i-1})^j(\alpha_i)^{p-1-j}}\\
            =& (1+\delta)(\alpha_\ell-\alpha_{\ell-1})\sqrt{p\sum_{j=0}^{p-1}(\alpha_{i-1})^j(\alpha_i)^{p-1-j}}\,.
        \end{split}
    \end{equation}
    We denote the above event by $\mathcal{E}_\ell$, then we have
    \begin{equation}
        \mathbb P\left[ \mathfrak{F}\right]\leq \sum_{\ell=1}^N \mathbb P\left[\mathcal{E}_\ell\right]\,.
    \end{equation}
    It remains for us to give an upper bound for $\mathbb P\left[\mathcal{E}_\ell\right]$ for each $\ell$. Our strategy is to control $\mathbb P\left[\mathcal{E}_\ell\right]$ and then apply the union bound. To this end, the first step is to compute the enumerations of
    \begin{equation}\label{eq-choices-before-ell-tensor}
        \{A^{(s)}_v:1\leq s\leq p, |v|\leq \ell\}\,.
    \end{equation}
    which is bounded by (this follows from controlling the enumerations of each vertex $v$ with $|v|\leq \ell$)
    \begin{equation}\label{eq-enum-each-ell-tensor}
        \operatorname{Enum}_\ell:=n^{pk\sum_{i=1}^\ell D^{i-1}(\alpha_i-\alpha_{i-1})}=\exp\left( \left(pk\sum_{i=1}^\ell D^{i-1}(\alpha_i-\alpha_{i-1})\right)\log n \right)\,.
    \end{equation}
   
    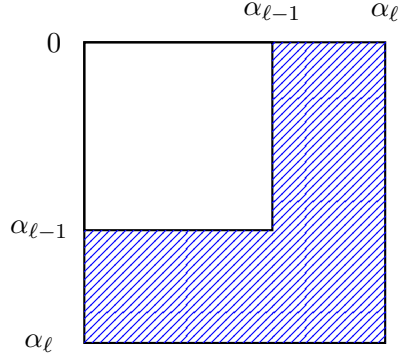
\begin{figure}
    \centering
   \begin{tikzpicture}[scale=1, yscale=-1]

  \def\a{4}  
  \def\b{2.5}

  \draw[thick] (0,0) rectangle (\a,\a);

  \draw[thick] (0,0) rectangle (\b,\b);

  \fill[pattern=north east lines, pattern color=blue] (\b,0) rectangle (\a,\a); 
  \fill[pattern=north east lines, pattern color=blue] (0,\b) rectangle (\b,\a);

  \node at (-0.4, 0) {$0$};
  \node at (-0.6, \b) {$\alpha_{\ell-1}$};
  \node at (-0.6, \a) {$\alpha_\ell$};

  \node at (\b, -0.4) {$\alpha_{\ell-1}$};
  \node at (\a, -0.4) {$\alpha_\ell$};

\end{tikzpicture}

\caption{In the setting $p=2$, showing density of points covered by $E_v$ }
    \label{fig:alpha-l}
    
\end{figure}
    
    Fixing the choices of \eqref{eq-choices-before-ell-tensor}, and note that the number of entries in each $E_v$ is $(\alpha_\ell^p-\alpha_{\ell-1}^p)k^p$, see Figure \ref{fig:alpha-l}. Further there are $D^{\ell-1}$ number of vertices in $\mathcal{T}$ with depth $\ell$.

 Thus we obtain,
    \begin{equation}
\sum_{v:|v|=\ell}\sum_{(i_1,\dots,i_p)\in E_v} G_{i_1,\dots,i_p}\sim\mathbf{N}\left(0,D^{\ell-1}(\alpha_\ell^p-\alpha_{\ell-1}^p)k^p\right)\,.
    \end{equation}
    This implies, 
    \begin{equation}\label{eq-est-prob-tensor}
    \begin{split}
        &\operatorname{Prob}_\ell  := \mathbb P\left[ \sum_{v:|v|=\ell}\gamma_v^*\geq D^{\ell-1}(1+\delta)(\alpha_\ell-\alpha_{\ell-1})\sqrt{p\sum_{j=0}^{p-1}(\alpha_{i-1})^j(\alpha_i)^{p-1-j}} \right]\\
        =&\mathbb P\left[  \sum_{v:|v|=\ell}\sum_{(i_1,\dots,i_p)\in E_v} G_{i_1,\dots,i_p} \geq D^{\ell-1}(1+\delta)(\alpha_\ell-\alpha_{\ell-1})\sqrt{p\sum_{j=0}^{p-1}(\alpha_{i-1})^j(\alpha_i)^{p-1-j}} \mathfrak D_n \right]\\
        \leq &\exp\left(-\frac{(D^{\ell-1})^2(1+\delta)^2(\alpha_\ell-\alpha_{\ell-1})^2p\sum_{j=0}^{p-1}(\alpha_{i-1})^j(\alpha_i)^{p-1-j} \mathfrak D_n^2}{2D^{\ell-1}(\alpha_\ell^p-\alpha_{\ell-1}^p)k^p}\right)\\
        = & \exp\left( -p k D^{\ell-1}(1+\delta)^2 (\alpha_\ell-\alpha_{\ell-1})\log n \right)\,.
    \end{split}
    \end{equation}
    Combining \eqref{eq-enum-each-ell-tensor} and \eqref{eq-est-prob-tensor} yields
    \begin{equation}
        \begin{split}
            &\mathbb P\left[\mathcal{E}_\ell\right]\leq \operatorname{Enum}_\ell\times \operatorname{Prob}_\ell\\
            \leq & \exp\left( pk\log n\left( \sum_{i=1}^\ell D^{i-1}(\alpha_i-\alpha_{i-1}) - D^{\ell-1}(1+\delta)^2(\alpha_\ell-\alpha_{\ell-1}) \right) \right)\\
            = & {\exp\left( pk\log n\left( \sum_{i=1}^{\ell-1}D^{i-1}(\alpha_i-\alpha_{i-1}) +(1-(1+\delta)^2)D^{\ell-1}(\alpha_\ell-\alpha_{\ell-1}) \right) \right)}\\
        \leq & {\exp\left( pk\log n\left( \sum_{i=1}^{\ell-1}D^{\ell-2}(\alpha_i-\alpha_{i-1}) +(-2\delta-\delta^2)D^{\ell-1}(\alpha_\ell-\alpha_{\ell-1}) \right) \right)}\\
        \leq  &\exp\left( pk\log n\left( D^{\ell-2}\alpha_{\ell-1} - D^{\ell-1}2\delta (\alpha_\ell-\alpha_{\ell-1}) \right) \right)\,,
        \end{split}
    \end{equation}
    where the second-to-last line follows from the inequality $D^{i-1}\leq D^{\ell-2}$ for all $i\leq \ell-1$ and in the last line we apply the telescoping sum. Recalling the definition of $D$ in \eqref{eq-choice-D}, we have that the above expression is bounded by $\exp(-cpk\log n)$. The proof is complete. 
\end{proof}

\subsection{Completing the proof of Theorem \ref{thm-hardness-online}:}
\label{sec:proof-neg}
 In the following, we introduce the notion of \emph{correlated instance} which serves as an ingredient in the hardness result. Consider the tree $\mathcal{T}$ and the sequence $\vec{\alpha}=(\alpha_0,\alpha_1,\dots,\alpha_N)$ defined in Section \ref{sec:bogp}.

\begin{definition}[$(\mathcal{T},\vec{\alpha})$-\emph{correlated instance}]
    For each vertex $v\in\mathcal{T}$, we assign a tensor $E^v$ with independent standard Gaussian random entries $\{E^v_{(i_1,\dots,i_p)}:i_1,\dots, i_p\leq n\}$. A $(\mathcal{T},\vec{\alpha})$-\emph{correlated instance} is a set of tensors $\{G^v\}_{v\in\mathcal{L}}$ indexed by $\mathcal{L}$ in $ (\mathbb R^n)^{\otimes p}$. For each $v\in \mathcal{L}$, 
    \begin{equation}
        G^v_{i_1,\dots,i_p}=E^{v(j)}_{i_1,\dots,i_p},\text{ for }(i_1,\dots,i_p) \in O_{\alpha_jn}\setminus O_{\alpha_{j-1}n}\,,j=1,...,N\,.
    \end{equation}
\end{definition}
For a tensor $G\in (\mathbb R^n)^{\otimes p}$, we denote $\operatorname{Suc}_{\mathcal{A}}(G)$ the event that $\mathcal{A}$ outputs a subtensor $\mathcal{A}(G)\in (\mathbb R^k)^{\otimes p}$ by taking $G$ as input such that $$\operatorname{Sum}(\mathcal{A}(G))\geq \left(\frac{2p}{p+1}+\epsilon\right)\fD_n\,.$$ Then we have the following lemma. 

\begin{lemma}
    \label{lem-suc-prob-tree}
    Suppose that for $G$ with i.i.d $\mathbf{N}(0,1)$ entries,
    \begin{equation}
        \mathbb P\left[ \operatorname{Suc}_\mathcal{A}(G) \right]= p_{\operatorname{suc}}\,.\notag
    \end{equation}
    Then for a $(\mathcal{T},\vec{\alpha})$-\emph{correlated instance} $\{G^v\}_{v\in\mathcal{L}}$, we have
    \begin{equation}\label{eq-ultimate-induction-on-tree}
        \mathbb P\left[ \operatorname{Suc}_\mathcal{A}(G^v)\text{ for all }v\in\mathcal{L}\right]\geq p_{\operatorname{suc}}^{|\mathcal{L}|}\,.
    \end{equation}
\end{lemma}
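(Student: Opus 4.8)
The plan is to prove the inequality \eqref{eq-ultimate-induction-on-tree} by induction on the tree structure, exploiting the key observation that the correlated instances $\{G^v\}_{v \in \mathcal{L}}$ are built from a collection of \emph{independent} Gaussian tensors $\{E^v\}_{v \in \mathcal{T}}$, one per vertex, and that the event $\operatorname{Suc}_\mathcal{A}(G^v)$ depends only on the entries of $G^v$ inside the principal block $O_n$ — which, by construction, is assembled from $E^{v(1)}, E^{v(2)}, \dots, E^{v(N)}$ along the root-to-$v$ ray. The crucial point, which I would isolate as the first step, is that because $\mathcal{A}$ is \emph{online} (Definition \ref{def-online}), the portion of the output that determines whether $\operatorname{Suc}_\mathcal{A}(G^v)$ holds up to ``scale $\alpha_j n$'' depends only on the entries $\{G^v_{i_1,\dots,i_p} : i_1,\dots,i_p \leq \lfloor \alpha_j n\rfloor\}$, i.e. on $E^{v(1)}, \dots, E^{v(j)}$ — but actually for the lemma as stated we only need the weaker fact that $\operatorname{Suc}_\mathcal{A}(G^v)$ is measurable with respect to $\sigma(E^{v(1)},\dots,E^{v(N)})$.

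The induction would run from the leaves up to the root (or equivalently, peel off the deepest level). Concretely: fix a vertex $u$ at depth $N-1$ with its $D$ children $w_1,\dots,w_D \in \mathcal{L}$. Conditionally on all tensors $\{E^v : |v| \leq N-1\}$, the tensors $E^{w_1},\dots,E^{w_D}$ are i.i.d.\ and independent of everything conditioned on; and for each child $w_m$, the instance $G^{w_m}$ agrees with a fixed (conditionally determined) tensor on $O_{\alpha_{N-1}n}$ and equals $E^{w_m}$ on $O_{\alpha_N n}\setminus O_{\alpha_{N-1}n} = O_n \setminus O_{\alpha_{N-1}n}$. Using the online property, one argues that conditionally the event $\operatorname{Suc}_\mathcal{A}(G^{w_m})$ has probability at least $p_{\operatorname{suc}}$ — this is the heart of the matter and requires care. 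The cleanest route is to note that for \emph{any} fixed values of $E^{v(1)},\dots,E^{v(N-1)}$ (not just conditionally typical ones), the instance $G^{w_m}$ has the same joint distribution as a genuine i.i.d.\ $\mathbf{N}(0,1)$ tensor \emph{restricted to the complement block}, so that $\mathbb P[\operatorname{Suc}_\mathcal{A}(G^{w_m}) \mid E^{v(1)},\dots,E^{v(N-1)}] \geq p_{\operatorname{suc}}$ will hold \emph{provided} one has first established a ``prefix monotonicity'' statement: conditioning on any partial assignment of the low-scale blocks can only be handled if success is monotone or if one integrates correctly. The honest way is: $\mathbb P[\operatorname{Suc}_\mathcal{A}(G)] = p_{\operatorname{suc}}$ where $G$ is fully i.i.d.; write $G = (G_{\text{low}}, G_{\text{high}})$ for the split at scale $\alpha_{N-1}$; then $p_{\operatorname{suc}} = \mathbb E_{G_{\text{low}}}\big[\mathbb P[\operatorname{Suc}_\mathcal{A}(G)\mid G_{\text{low}}]\big]$, and conditionally on the common low part, the $D$ high parts are i.i.d., hence $\mathbb P[\bigcap_m \operatorname{Suc}_\mathcal{A}(G^{w_m}) \mid G_{\text{low}}] = \prod_m \mathbb P[\operatorname{Suc}_\mathcal{A}(G^{w_m})\mid G_{\text{low}}] = \mathbb P[\operatorname{Suc}_\mathcal{A}(G)\mid G_{\text{low}}]^D$, and then take expectation and apply Jensen. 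This replaces the bare $p_{\operatorname{suc}}$ bound with the sharper conditional product, which is exactly what makes the induction close.

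Iterating this from depth $N$ down to depth $1$: at each level $\ell$, one has a conditional (on $E^{v}$ with $|v|\leq \ell-1$) success probability for the subtree rooted at each depth-$\ell$ vertex, and the $D$ subtrees below a common parent share only the blocks of scale $\leq \alpha_{\ell-1}$, hence are conditionally independent given those blocks; Jensen's inequality (applied to $x \mapsto x^D$, convex on $[0,1]$) upgrades the product of conditional probabilities to at least the $D$-th power of the conditional probability one level up. Unwinding the recursion over all $N$ levels and taking a final expectation over $E^{\text{root}}$ gives $\mathbb P[\operatorname{Suc}_\mathcal{A}(G^v)\ \forall v\in\mathcal{L}] \geq p_{\operatorname{suc}}^{D^{N-1}} = p_{\operatorname{suc}}^{|\mathcal{L}|}$.

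The main obstacle is making the conditional-independence step rigorous: one must verify precisely that for a $(\mathcal{T},\vec\alpha)$-correlated instance, conditioning on the shared ancestor tensors $\{E^{v} : |v| \leq \ell-1\}$ renders the events $\operatorname{Suc}_\mathcal{A}(G^w)$ for the different depth-$\ell$ descendants $w$ conditionally independent \emph{and} conditionally distributed exactly as $\operatorname{Suc}_\mathcal{A}$ of a fresh i.i.d.\ tensor with the corresponding low-scale blocks frozen. This uses (i) that distinct vertices at the same depth carry independent fresh tensors $E^w$, (ii) that the blocks $O_{\alpha_j n}\setminus O_{\alpha_{j-1}n}$ partition $O_n$, so each $G^w$ is a measurable function of its own ancestor ray's tensors only, and (iii) that distinct depth-$\ell$ vertices below a common parent share exactly the ancestor ray up to depth $\ell-1$ and nothing more. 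Once these measurability/independence bookkeeping facts are pinned down, the probabilistic content is just the repeated Jensen step, and no property of $\mathcal{A}$ beyond being a (possibly randomized) map — in particular not even onlineness — is strictly needed for \emph{this} lemma, though onlineness will be essential later when combining with Proposition \ref{prop:forbidden-structure-tensor}.
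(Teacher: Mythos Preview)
Your proposal is correct and follows essentially the same route as the paper: condition on the shared low-scale blocks, use conditional independence of the $D$ subtrees below a common parent together with the exchangeability of the children, apply Jensen to $x\mapsto x^{D}$, and iterate over the $N$ levels to obtain $p_{\operatorname{suc}}^{D^{N-1}}=p_{\operatorname{suc}}^{|\mathcal{L}|}$. Your closing remark that onlineness is not actually used in this lemma (only later, when matching the algorithm's output to the forbidden structure) is also correct.
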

\begin{proof}
    Define $\mathcal{F}_i$ to be the sigma field generated by $\{G^v_{\leq \alpha_i n}: v\in\mathcal{L}\}$. For each $v\in \mathcal{T}$, let $\mathcal{L}(v)$ be the descendant of $v$ in $\mathcal{L}$. Moreover, we denote $\mathcal{S}(v)$ as the event that for each $u\in\mathcal{L}(v)$, $\operatorname{Sum}(\mathcal{A}(G^u))\geq (2p/(p+1)+\epsilon)\fD_n$. Let $v_o$ be the root of $\mathcal{T}$, then we note that the event in the LHS of \eqref{eq-ultimate-induction-on-tree} is $\mathcal{S}(v_o)$. In the following, we will show
    \begin{equation}
        \label{eq-induction-statement}
        \mathbb P\left[\mathcal{S}(v_o)\right]\geq \prod_{v:|v|=i}\mathbb P\left[\mathcal{S}(v)\right]\,,\forall~~ 0\leq i\leq N\,
    \end{equation}
    via an induction procedure. The base step of the induction $i=0$ trivially holds. Suppose that \eqref{eq-induction-statement} is true for $\ell$. For each $v$ with $|v|=\ell$, we denote its offspring by $v_1,\dots,v_D$, then we have $\mathcal{S}(v)=\cap_{j=1}^D\mathcal{S}(v_j)$. Conditioned on $\mathcal{F}_\ell$, the events $\mathcal{S}(v_j),1\leq j\leq D$ are independent. Then we obtain,    \begin{equation}
        \begin{split}
            \mathbb P\left[\mathcal{S}(v)\right]&=\mathbb E\left[ \prod_{j=1}^D\mathbb P\left[\mathcal{S}(v_j)|\mathcal{F}_\ell\right]\right]=\mathbb E\left[\left(\mathbb P\left[\mathcal{S}(v_1)|\mathcal{F}_\ell\right]\right) ^D\right]\\
            &\geq \left( \mathbb E\left[ \mathbb P\left[\mathcal{S}(v_1)|\mathcal{F}_\ell\right]  \right]  \right)^D=\left(\mathbb P\left[\mathcal{S}(v_1)\right]\right)^D =\prod_{j=1}^D\mathbb P\left[\mathcal{S}(v_j)\right]\,,
        \end{split}\notag
    \end{equation}
    where in the first equality we used the conditional independence; the second and the last equalities follow from the symmetries among $v_1,\dots,v_D$; and the inequality comes from applying Jensen's inequality. Then we get that \eqref{eq-induction-statement} holds for $\ell+1$. The induction procedure is complete, establishing \eqref{eq-induction-statement} for $0\leq i\leq N$. By taking $i=N$ in \eqref{eq-induction-statement}, we have
    \begin{equation}
        \mathbb P\left[ \mathcal{S}(v_o)\right]\geq \prod_{v\in\mathcal{L}}\mathbb P\left[\mathcal{S}(v)\right]=p_{\operatorname{suc}}^{|\mathcal{L}|}\,.\notag
    \end{equation}
    The proof is complete.
\end{proof}
\begin{proof}[Proof of Theorem~\ref{thm-hardness-online}]
    For $(\mathcal{T},\vec\alpha)$-correlated instance $\{G^v\}_{v\in\mathcal{L}}$, note that if $\operatorname{Suc}_\mathcal{A}(G^v)$ holds for all $v\in\mathcal{L}$, then $\{\mathcal{A}(G^v)\}_{v\in\mathcal{L}}$ forms \emph{forbidden structure} according to Definition~\ref{def-online}. Then we have
    \begin{equation}
        p_{\operatorname{suc}}^{|\mathcal{L}|}\leq \exp(-ck\log n)\,.
    \end{equation}
    This implies $p_{\operatorname{suc}}\leq \exp(-\frac{c}{D^N}k\log n)$, completing the proof.
\end{proof}

\section*{Acknowledgments}
This material is based upon work supported by the National Science Foundation under Grant
No. DMS-1928930, while all three authors were in residence at the Simons Laufer
Mathematical Sciences Institute in Berkeley, California, during the Spring 2025 semester. SB was partially supported by NSF DMS-2113662, DMS-2413928, and DMS-2434559 and RTG grant DMS-2134107. DG was partially supported by NSF Grant CISE 2233897. SG is partially supported by National Key R\&D program of China
(No. 2023YFA1010103) and NSFC Key Program (Project No. 12231002).  We thank Vittorio Erba, Shuangping Li and  Lenka Zdeborov\'a for enlightening conversations at the start of this work.

\bibliographystyle{plain} 
\bibliography{References}

\end{document}